\newcommand{\sphere}{{\mathbb S}}
\newcommand{\m}{{\mathfrak m}}
\newcommand{\Lra}{\Longrightarrow}
\newcommand{\inv}{^{-1}}
\newcommand{\ep}{\epsilon}
\newcommand{\tr}{\text{tr}}
\newcommand{\RNum}[1]{\uppercase\expandafter{\romannumeral #1\relax}}
\newcommand{\be}{\begin{equation}}
\newcommand{\ee}{\end{equation}}
\newcommand{\ba}{\begin{align*}}
\newcommand{\ea}{\end{align*}}
\newtheorem{lem}{Lemma}[section]
\newtheorem{prop}[lem]{Proposition}
\newtheorem{theorem}{Theorem}[section]
\newtheorem{remark}{Remark}[section]
\newtheorem{proposition}[lem]{Proposition}
\theoremstyle{definition}
\newtheorem{definition}{Definition}
\def\mod#1{{\ifmmode\text{\rm\ (mod~$#1$)}
\else\discretionary{}{}{\hbox{ }}\rm(mod~$#1$)\fi}}
\begin{document}

\bibliographystyle{amsplain}

\author{Albert Chau$^1$}
\address{Department of Mathematics,
The University of British Columbia, Room 121, 1984 Mathematics
Road, Vancouver, B.C., Canada V6T 1Z2} \email{chau@math.ubc.ca}

\thanks{$^1$Research
partially supported by NSERC grant no. \#327637-06}

\author[A. Martens]{Adam Martens}
\address{Department of Mathematics, The University of British Columbia, 1984 Mathematics Road, Vancouver, B.C.,  Canada V6T 1Z2.  Email: martens@math.ubc.ca. }

\title{On the Bartnik mass of non-negatively curved CMC spheres}

\maketitle

\vspace{-20pt}

\begin{abstract}

Let $g$ be a smooth Riemannian metric on $\mathbb{S}^2$ and $H>0$ a constant.  We establish an upper bound for the corresponding Bartnik mass $\m_B(\mathbb{S}^2, g, H)$ assuming that the Gauss curvature $K_g$ is non-negative.   Our upper bound approaches the Hawking mass  $\m_H(\mathbb{S}^2, g, H)$ when either $g$ becomes round or else $H\to 0^+$, the bound is zero for $H$ sufficiently large, and in any case the bound is not more than $r/2=\m_H(\mathbb{S}^2, g, 0)$.  We obtain upper bounds on $\m_B(\mathbb{S}^2, g, H)$ as well in the case when  $g$ is arbitrary and $H$ is sufficiently large depending on $g$.

%A necessary condition for a smooth Riemann surface $(\Sigma^2, g)$ to arise from an apparent horizon in an %asyptotically flat space-time is that $\Sigma=\mathbb{S}^2$ and the first eigenvalue $\lambda_1(g)$ of the operator %$L_g:=-\Delta_g +K_g$ is non-negative \cite{Haw, HI}.  Here we have assumed the dominant energy condition and the %time symmetric setting.   Conversely, Mantoulidis-Schoen \cite{MS} proved that the condition $\lambda_1(g)>0$ is %sufficient for $(\mathbb{S}^2, g)$ to arise from an apparent horizon, and moreover showed the associated  ADM mass %could be made arbitrarily close to the Hawking mass $\sqrt{\text{Area}(\mathbb{S}^2, g)/16\pi}$.  In particular, this %determines the Bartnik quasilocal mass (introduced by Bartnik \cite{Bartnik} in 1989) associated with $(\mathbb{S}^2, %g)$ in this setting. 

%\begin{equation}\nonumber
%\scM_+ :=\{ \text{smooth Riemannian metrics $g$ on $\mathbb{S}^2$ such that $\lambda_1(L_g:=\Delta_g +K_g)>0$}
%\end{equation}

\end{abstract}

\section{Introduction}

 Let $g$ be a smooth metric and $H$ a smooth function on $\mathbb{S}^2$.  The Bartnik mass \cite{Bartnik} of the triple $(\mathbb{S}^2, g, H)$ is defined as 
\begin{equation}\label{Bartnikmass}\m_B(\mathbb{S}^2, g, H):= \inf_{(M, \gamma)}\{\m_{ADM}(M, \gamma): (M, \gamma) \text{ is an admissible extension of }  (\mathbb{S}^2, g, H)  \}
\end{equation}
where $\m_{ADM}(M, \gamma)$ is the ADM mass \cite{ADM} and admissibility means that $M=[0,\infty)\times \mathbb{S}^2$ and $\gamma$ is  a smooth asymptotically flat Riemannian metric having non-negative scalar curvature such that $\gamma |_{\partial M} =g$, and $\partial M$ has mean curvature $H$ and is outer minimizing in $M$. The Bartnik mass associates a notion of quasi-local mass to a closed compact $3$ dimensional region $\Omega$ of an asymptotically flat space-like hypersurface in a Lorentzian $4$-manifold, where $\partial \Omega=\mathbb{S}^2$ and $g$ and $H$ are respectively the metric and mean curvature induced from the associated Riemannian metric $g_{\Omega}$ on $\Omega$.  Assuming the dominant energy condition and time symmetric setting, $g_{\Omega}$ will have non-negative scalar curvature, and if $(M, \gamma)$ is further required to smoothly extend $(\Omega, g_{\Omega})$ in \eqref{Bartnikmass} as in the original formulation in \cite{Bartnik}, then $\m_B(\mathbb{S}^2, g, H)$ will be non-negative by the positive mass theorem \cite{SY}.  

Another such notion of quasi-local mass associated to the triple $(\mathbb{S}^2, g, H)$ is the Hawking mass:
\begin{equation}\label{Hawkingmass}
\m_H(\mathbb{S}^2, g, H):= \sqrt{\frac{Area(\mathbb{S}^2, g)}{16\pi}}\left(1-\int_{\mathbb{S}^2} H^2\right).
\end{equation}
The proof of the Riemman Penrose inequality in \cite{HI} implies that the Bartnik mass is bounded below by the Hawking mass thus providing a positive lower bound for the Bartnik mass when $\m_H(\mathbb{S}^2, g, H)>0$.

In this article we will focus on the problem of bounding the Bartink mass from above.     In \cite{MS} Mantoulidis and Schoen showed that when the operator $-\Delta_g+ K_g$ has positive first eigenvalue and $H=0$, then the Bartnik mass actually attains the above lower bound as given by the Riemann Penrose inequality so that one has $$\m_B(\mathbb{S}^2, g, 0)=\m_H(\mathbb{S}^2, g, 0)=\sqrt{Area(\mathbb{S}^2)/16\pi}.$$  In \cite{CM} the authors of this paper extended the above result to the degenerate case when the operator $-\Delta_g+ K_g$ has zero first eigenvalue.  In  \cite{CMM, MWX, LS} various upper bounds for Bartnik mass were obtained under the assumption that the Gauss curvature $K_g$ is strictly positive and that $H$ is a positive constant.  In \cite{MX}, assuming the Gauss curvature $K_g$ is strictly positive and that $H$ is a positive function, Miao-Xie adapted the method in \cite{ST} and its variation in \cite{M} to construct admissible extensions with zero scalar curvature and ADM mass arbitrarily close to $\sqrt{Area(\mathbb{S}^2)/16\pi}$ from above, thus establishing the bound $$m_B(\mathbb{S}^2, g, H) \leq \sqrt{Area(\mathbb{S}^2)/16\pi}$$.

 In Theorem \ref{mainthm} below, assuming the Gauss curvature $K_g$ is non-negative and that $H$ is a positive constant we obtain an upper bound for the Bartnik mass where the bound is not more than $\sqrt{Area(\mathbb{S}^2)/16\pi}$ and is in fact equal to zero for all $H$ sufficiently large, while the bound approaches the Hawking mass as $g$ converges smoothly to a round metric and the Hakwing mass is non-negative.  Theorem \ref{mainthm} follows in part from Theorem \ref{mainthm2} which also implies an upper bound for the Bartnik mass for an arbitrary metric $g$ provided $H$ is sufficiently large depending on $g$, where again the upper bound equals zero for $H$ sufficiently large.

%the assumption that $H$ is a positive constant and the Gauss curvature $K_g$ is strictly positive.  The estimates in these %works can be expressed in the form $$\m_B(\mathbb{S}^2, g, H)\leq C(g, H) \m_H(\mathbb{S}^2, g, H)$$ where the factor %$C(g, H)$ depends on $g$ and $H$.  In \cite{CMM} this factor satisfies in particular, $C(g, H)\to 1$ as either $g$ %converges smoothly to a round metric or $H\to 0^+$, while $C(g, H)\to \infty$ as either $\min  r^2 K_g\to 0$ (where %$r=\sqrt{Area(\mathbb{S}^2, g)/4\pi}$) or $H$ approaches a finite upper bound determined by $g$.  This result was %improved in \cite{MWX} in the sense that no upper bound on $H$ was required  and $C(g, H)\to 1$ under the same %conditions, while $C(g, H)\to \infty$ as either $\min  r^2 K_g\to 0$ or $H\to \infty$.  The methods in the works above are all %comparable and based on the approach first used in \cite{MS} which involves an interpolation between the conformal %factors of $g$ and a round metric, a corresponding collar metric construction on $\mathbb{S}^2\times[0,1]$, and a gluing %of the collar to an exterior Schwarzchild region.  Other approaches have been used as well to obtain upper bounds for the %Bartnik mass.  Using the longtime solution to the Ricci flow on $\mathbb{S}^2$ starting from $g$, an estimate for $C(g, %H)$ was obtained in \cite{LS} using the longtime solution to the Ricci flow on $\mathbb{S}^2$ starting from $g$, where in %particular $C(g, H)\to 1$ as $g$ converges smoothly to a round metric.   

\begin{theorem}\label{mainthm}
Let $g$ a smooth Riemannian metric on $\mathbb{S}^2$ with $K_g\geq 0$ and $H>0$ a constant.  Then if $D=D(g, H)$ is the constant in Defnintion \ref{def1} we have

 %$r=\sqrt{area(\mathbb{S}^2, g)/4\pi}$.   

%\begin{enumerate}
%\item [1.]  If $\beta(g) + H^2 > 0$ then
%\begin{equation}\label{bound1}
%\m_B(\mathbb{S}^2, g, H)  \leq  \max \left(\frac{r\sqrt{1+D}}{2} \left[ 1-\frac{r^2H^2}{4(1+D)}\right], \,\, \,\,0\right)
%\end{equation}

%\item [2.] 
\begin{equation}\label{bound2}
\begin{split}
\m_B(\mathbb{S}^2, g, H) & \leq \max \left[\min \left(\frac{r\sqrt{1+D}}{2} \left[ 1-\frac{r^2H^2}{4(1+D)}\right], \,\,\,\, \frac{r}{2}\right), 0\right]
\end{split}
\end{equation}
%\end{enumerate}
where $r=\sqrt{Area(\mathbb{S}^2, g)/4\pi}$.  Moreover, $D(g, H)$ satisfies $D\to 0$ if either $H\to 0^+$ or $g$ converges smoothly to a round metric.
\end{theorem}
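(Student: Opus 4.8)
The plan is to bound $\m_B(\mathbb S^2,g,H)$ from above by exhibiting explicit admissible extensions of $(\mathbb S^2,g,H)$: a Mantoulidis--Schoen type collar over $(\mathbb S^2,g,H)$ capped by a rotationally symmetric exterior (a piece of Schwarzschild, or of flat $\mathbb R^3$), and by taking the best of the natural choices. The one place the hypothesis $K_g\ge 0$ enters globally is this: since $\int_{\mathbb S^2}K_g\,dA=4\pi$ by Gauss--Bonnet, the operator $-\Delta_g+K_g$ has strictly positive first eigenvalue (its Rayleigh quotient is $\ge 0$, and vanishing would force $K_g\equiv 0$, impossible on $\mathbb S^2$; the borderline situation is in any case covered by \cite{CM}). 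Under this condition the Mantoulidis--Schoen machinery \cite{MS}, in the form used in \cite{CM,CMM,MWX}, provides, for a prescribed round metric on $\mathbb S^2$, a path of metrics $\{g_s\}_{s\in[0,1]}$ from $g_0=g$ to a round $g_1$ along which the quantities governing the scalar curvature of a warped collar stay controlled.

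Next I would build the collar. Using the ansatz $\gamma_{\mathcal C}=A^2\,ds^2+\rho(s)^2 g_s$ on $\mathcal C=[0,1]\times\mathbb S^2$ with a constant $A$ taken large and a suitable profile $\rho$, I would produce $(\mathcal C,\gamma_{\mathcal C})$ with $R_{\gamma_{\mathcal C}}\ge 0$, with inner boundary $\{0\}\times\mathbb S^2$ isometric to $(\mathbb S^2,g)$ and of constant mean curvature $H$, with outer boundary $\{1\}\times\mathbb S^2$ a round sphere of some area radius $\bar r$ and constant mean curvature $\bar H$, and with $\{0\}\times\mathbb S^2$ outer minimizing in $\mathcal C$. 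Accommodating the nonzero $H$ at the inner boundary while keeping $R_{\gamma_{\mathcal C}}\ge 0$ forces the outer area radius up; I would encode exactly how much via the constant $D=D(g,H)$ of Definition \ref{def1}, arranged so that $\bar r=r\sqrt{1+D}$ and the matching outer data is $\bar H=H/(1+D)$. This collar estimate --- tracking how the profile $\rho$, the constant $A$, the path $\{g_s\}$, and the prescribed $H$ interact so as to force $R_{\gamma_{\mathcal C}}\ge 0$ with these sharp exponents, and verifying Hawking--Geroch monotonicity of the slice foliation so that $\{0\}\times\mathbb S^2$ is outer minimizing --- is the step I expect to be the main obstacle; it is the heart of Theorem \ref{mainthm2}.

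Finally I would attach the exterior and optimize. Let $m_D:=\tfrac{\bar r}{2}\big(1-\tfrac{\bar r^2\bar H^2}{4}\big)=\tfrac{r\sqrt{1+D}}{2}\big(1-\tfrac{r^2H^2}{4(1+D)}\big)$ denote the Hawking mass of the outer round sphere. If $m_D\ge 0$, glue to $\{1\}\times\mathbb S^2$ the exterior region $\{t\ge\bar r\}$ of the Schwarzschild metric of mass $m_D$: one has $\bar r\ge 2m_D$, the coordinate spheres are outer minimizing there, the identity $\bar H=\tfrac{2}{\bar r}\sqrt{1-2m_D/\bar r}$ makes the mean curvatures agree so the distributional scalar curvature along the seam is non-negative by the corner lemma of \cite{ST,M}, and the extension has ADM mass $m_D$. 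If instead $m_D<0$ --- equivalently $1-\tfrac{r^2H^2}{4(1+D)}<0$, i.e.\ $\bar H\ge 2/\bar r$ --- glue on the exterior $\{|x|\ge\bar r\}$ of flat $\mathbb R^3$: the corner condition is now exactly $\bar H\ge 2/\bar r$ and holds, and the ADM mass is $0$. This gives $\m_B(\mathbb S^2,g,H)\le\max(m_D,0)$. Running separately the area-preserving construction of \cite{MS,CM}, with the inner mean curvature $H$ absorbed by a modification of $\rho$ near $s=0$, produces an admissible extension of ADM mass $\le r/2$, which is what one uses in the regime ($H$ small, $g$ far from round) where $m_D>r/2$; taking the better of the two yields \eqref{bound2}. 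The concluding limit statement should then follow by inspecting the formula for $D$ in Definition \ref{def1}: $D(g,H)$ is a product of a factor measuring the deviation of $g$ from roundness along the chosen path --- an oscillation-type quantity in the conformal factor of $g$, tending to $0$ as $g$ converges smoothly to a round metric --- and a factor of order $r^2H^2$, tending to $0$ as $H\to 0^+$.
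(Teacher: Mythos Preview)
Your overall strategy matches the paper's exactly: combine the collar-plus-exterior bound $\max(m_D,0)$ of Theorem~\ref{mainthm2} with a separate $r/2$ bound (the paper's Theorem~\ref{theorem2}), then take the minimum. Two points of imprecision are worth flagging.

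First, the $r/2$ bound is not obtained by ``running the area-preserving construction of \cite{MS,CM} with $H$ absorbed by a modification of $\rho$ near $s=0$.'' The paper builds an entirely separate collar with $E(t)=1+\epsilon t$ and a piecewise-defined $\Phi(t)$ that is linear near $t=0$ (to hit the correct inner mean curvature $H$) and then constant; the verification $R_\gamma\ge 0$ hinges on the quantitative lower bound $K_{g(t)}\ge ct$ along the Mantoulidis--Schoen path, which in turn uses $K_g\ge 0$ together with the explicit conformal form $\xi_t^*(e^{2\alpha(t)w+a(t)}g_*)$ and $\alpha'(0)\neq 0$. Your sketch does not indicate this ingredient, and without it the nonnegative-scalar-curvature check for the nearly-area-preserving collar does not go through.

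Second, your description of why $D(g,H)\to 0$ is not correct: $D$ is not a product of a roundness defect and a factor of order $r^2H^2$. From the paper's estimate \eqref{Destimate1} one has, along a suitable path,
\[
D(g,H)\ \le\ \max_{\mathbb S^2\times[0,1]}\frac{H\sqrt{2t|g'|^2}}{\sqrt{4K_{g(t)}+H^2}-H\sqrt{2t^2|g'|^2}},
\]
and the limits follow because the numerator vanishes either when $|g'|\to 0$ (roundness) or when $H\to 0^+$, while on the support of $|g'|$ one has $K_{g(t)}>0$ so the denominator stays bounded below. (Also, outer-minimizing of the inner boundary in the collar is a matter of the area profile $E(t)$ being increasing, not of Hawking--Geroch monotonicity; the paper handles the gluing and outer-minimizing via Proposition~\ref{prop2.1} from \cite{CMM} rather than the corner lemma.)
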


\begin{theorem}\label{mainthm2}
Let $g$ a smooth Riemannian metric on $\mathbb{S}^2$ and $H>0$ a constant.   Suppose there exists a $(g, H)$ admissible path $g(t)$ as in Definition \ref{def1}.  Then if $D=D(g, H)$ is the constant in Defnintion \ref{def1} we have

\begin{equation}\label{bound1}
\begin{split}
\m_B(\mathbb{S}^2, g, H) & \leq \max \left(\frac{r\sqrt{1+D}}{2} \left[ 1-\frac{r^2H^2}{4(1+D)}\right], \,\,\,\, 0\right)
\end{split}
\end{equation}
\end{theorem}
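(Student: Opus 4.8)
The plan is to construct explicit admissible extensions of $(\mathbb{S}^2, g, H)$ following the Mantoulidis--Schoen strategy as adapted in \cite{MX, MS, CM}: given a $(g,H)$ admissible path $g(t)$, one builds a ``collar'' region $([0,1]\times \mathbb{S}^2, g_{collar})$ interpolating from $(g, H)$ at the inner boundary to a round-sphere-with-appropriate-mean-curvature at the outer boundary, arranges for this collar to have non-negative scalar curvature and outer-minimizing inner boundary, and then glues on a rotationally symmetric Schwarzschild-type end of mass $m$ for a suitable $m$. The ADM mass of the resulting extension is essentially the Schwarzschild parameter $m$, so one must track how small $m$ can be taken subject to the scalar-curvature and boundary constraints, which is exactly where the constant $D = D(g,H)$ from Definition \ref{def1} enters.

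First I would recall the precise form of the collar metric, typically $g_{collar} = \phi(t)^2\, dt^2 + u(t)^2\, g(t)$ with $g(t)$ the admissible path, and compute its scalar curvature; the admissibility conditions in Definition \ref{def1} (presumably a bound of the type $\dot g(t) \leq$ something, together with $K_{g(t)} \geq 0$ or its integral version) are designed precisely so that one can choose $\phi$ and $u$ making $R_{g_{collar}} \geq 0$. Second, I would normalize so that at $t=1$ the metric is round of the correct area --- area $4\pi r^2(1+\text{something involving }D)$, reflecting the factor $\sqrt{1+D}$ in the bound --- and compute the mean curvature of the outer boundary, matching it to the mean curvature of a coordinate sphere in Schwarzschild of mass $m$. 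Third, I would solve the resulting inequality/equation for $m$: the Hawking-mass-type expression $\frac{r\sqrt{1+D}}{2}[1 - \frac{r^2 H^2}{4(1+D)}]$ appears as the value of $m$ forced by matching the area and the mean curvature $H$ across the glued boundary, essentially because the Hawking mass of a round sphere of area $4\pi \rho^2$ with constant mean curvature $H$ is $\frac{\rho}{2}(1 - \frac{\rho^2 H^2}{4})$, and here $\rho = r\sqrt{1+D}$. Fourth, when this quantity is negative (i.e. $H$ large, $r^2H^2 > 4(1+D)$), a Schwarzschild end of negative mass is not admissible, but one can instead cap off with a flat or small-positive-mass piece and the infimum of achievable ADM masses is $0$; hence the $\max(\cdot, 0)$.

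Throughout, the key technical points to verify are: (i) the inner boundary $\{0\}\times\mathbb{S}^2$ is outer minimizing in the full extension --- this usually follows from a monotonicity/foliation argument showing the areas of the $t$-slices and the Schwarzschild spheres are non-decreasing outward, which is where one needs the area to be genuinely increasing along the collar, forcing $u(t)$ monotone and contributing to the $1+D$ inflation factor; (ii) the metric is $C^0$ across the two gluing interfaces with mean curvatures matched from the correct sides (inner boundary mean curvature exactly $H$, and the collar--Schwarzschild interface mean curvatures agreeing), so that after a standard mollification (as in \cite{Miao} / \cite{ST}) the scalar curvature distribution is non-negative; (iii) asymptotic flatness and the identification of the ADM mass with $m$ are automatic from the Schwarzschild end.

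The main obstacle I expect is step (i) combined with the optimization in step (iii): one must choose the collar profile functions $u, \phi$ (and the target radius $\rho = r\sqrt{1+D}$) so that \emph{simultaneously} $R_{g_{collar}}\geq 0$ holds everywhere, the inner boundary stays outer minimizing, the mean curvature at $t=0$ is exactly the prescribed constant $H$, and the Schwarzschild mass needed at the outer interface is no larger than the claimed bound. Balancing these is a delicate ODE construction: making the collar ``fatter'' (larger $D$) makes it easier to achieve $R\geq 0$ and outer-minimizing but worsens the mass bound, so $D(g,H)$ must be defined as an infimum over admissible choices, and one then checks that $D \to 0$ as $H \to 0^+$ or $g \to$ round, since in those limits the obstruction to taking a thin, nearly-round collar disappears. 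I would isolate the core computation --- scalar curvature of the warped-product collar and the Schwarzschild matching --- into a lemma, then assemble the pieces and invoke the positive mass theorem / Riemann--Penrose framework only to know the construction yields a valid admissible extension whose ADM mass realizes the stated upper bound.
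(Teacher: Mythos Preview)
Your proposal is correct and follows essentially the same approach as the paper: build a collar $\gamma = E(t)\,g(t) + \Phi(t)^2\,dt^2$ along a $(g,H)$-admissible path, verify $R_\gamma \geq 0$ and $H_t \to H$, compute the Hawking mass of the outer round slice, and glue on a Schwarzschild end via the \cite{CMM} proposition, handling the large-$H$ case separately to get the $\max(\cdot,0)$. The one concrete point you leave unspecified is the explicit profile choice $E(t) = 1 + C\sqrt{t}$, $\Phi(t) = \tfrac{C}{2H\sqrt{t}}$ (equivalently $\gamma = (1+2Hs)\,g(s^2/4A^2) + ds^2$), which is precisely engineered so that the $(g,H)$-admissibility inequality \eqref{ghadmiss} becomes exactly the condition $R_\gamma \geq 0$; and in the negative-Hawking-mass regime the paper simply replaces $C$ by the larger constant $c = r_g^2 H^2/4 - 1$ in the same construction to force $\m_H(\Sigma_1)=0$, rather than capping with a separate flat piece as you suggest.
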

\begin{remark}The expression $\frac{r\sqrt{1+D}}{2} \left[ 1-\frac{r^2H^2}{4(1+D)}\right]$ approaches the Hawking mass $\m_H(\mathbb{S}^2, g, H)$ as $D\to 0$.
\end{remark}
\begin{remark}Given any smooth metric $g$, it is proven in Proposition \ref{p2} that a $(g, h)$ admissible path exists when either $K_g\geq 0$ or else when $H$ is sufficiently large depending on $g$.\end{remark}

%\begin{remark}
%When $K_g \geq 0$, the bound in \eqref{bound2} improves the bound in \eqref{bound1} precisely when $0<H \leq  \frac{2}%{r}\sqrt{1+D-\sqrt{1+D}}$ in which case we have $\m_B(\mathbb{S}^2, g, H)\leq r/2$.
%\end{remark}
\vspace{10pt}
\noindent $Acknowledgement$. The authors would like to thank Pengzi Miao for helpful comments and his interest in this work.

\vspace{10pt}
\section{Admissible paths and the definition of $D(g, H)$}
Given a smooth metric $g$ on $\mathbb{S}^2$ we define $$r_g:=\sqrt{Area(\mathbb{S}^2, g)/4\pi}$$ $$K_g:= \,\,\, \text{Gauss curvature of $g$}$$

 Now we define the following notions of admissible paths of metrics starting from $g$, and the corresponding constant $D(g, H)$ in Theorem \ref{mainthm}.

\begin{definition}\label{def0}

Given a  smooth metric $g$ on $\mathbb{S}^2$, a smooth path of metrics $\zeta=g(t)$ for $t\in [0, 1]$ is called 

$g$-$admissible$ if 
\begin{enumerate}
\item[a)] $g(0)=g$ and $g(1)$ is round (has constant curvature),
\item[b)] $\tr_{g} g'\equiv 0$ for all $t$.
%\item $\min_{\mathbb{S}^2} K_{g}\leq \min_{\mathbb{S}^2\times [0, 1]} K_{g(t)}$
%item $2t^2|g'|^2_{g} < \frac{r^{-2}\beta+H^2}{ H^2}$,
\end{enumerate}

%\begin{equation}
%b(\zeta):=\min_{\mathbb{S}^2\times [0, 1]} K_{g(t)}
%\end{equation}

\end{definition}

\begin{definition}\label{def1}
Given a smooth metric $g$ on $\mathbb{S}^2$ and a constant $H>0$.  A $g$-admissible path $\zeta=g(t)$ is called $(g, H)$-$admissible$ if there exists a positive constant $C>0$ for which 
\begin{equation}\label{ghadmiss}\frac{4C^2}{H^2} K_{g(t)} (1+C\sqrt{t})-2t|g'|^2(1+C\sqrt{t})^{2}+C^2 >0\,\,\,\,\,\, \text{on} \,\,\, \sphere^2\times [0,1].\end{equation} 
Defining $C_{\zeta}$ to be the infemum of all such constants $C$, we define $D(g, H)$ as
\begin{equation}
D(g, H):=\inf_{\text{$(g, H)$-admissible paths $\zeta$}} C_{\zeta}.
\end{equation}
%It will be shown in Proposition \ref{p1} below that $(g, H)$-admissible paths exist for any such pair $g, H$.
%\textcolor{red}{Replace above definition w}
\end{definition}

The following proposition confirms the existence of admissible paths when $K_g \geq 0$ or else $H$ is sufficiently large depending on $g$, in which cases lower bounds for $D(g, H)$ are also provided.

\begin{prop}\label{p2} Let $g$ be a smooth metric and $H>0$. 
\begin{enumerate}

\item [1.] If $K_g\geq 0$ then $(g, H)$-admissible paths $\zeta=g(t)$ exist, one can be chosen to satsify
$$\frac{4K_{g(t)}+H^2}{ H^2}-\sqrt{2t^2|g(t) '|^2_{g(t)}}>0 \,\,\,\,\,\, \text{on} \,\,\, \sphere^2\times [0,1],$$ and for any such path we have the estimate:
\begin{equation}\label{Destimate1}D(g, H)\leq \max_{\mathbb{S}^2\times[0,1]}\frac{\sqrt{2t|g '|^2_{g}}}{\sqrt{\frac{4K_{g(t)}+H^2}{ H^2}}-\sqrt{2t^2|g '|^2_{g}}}\end{equation}
%\begin{enumerate}
%\item [1)] If $K_g \geq 0$.
%\item [2)] If $H$ is sufficiently large, depending on $g$
%\end{enumerate}.
\item [2.] If $H$ is sufficiently large depending on $g$ then $(g, H)$-admissible paths exist.  In particular, a $g$-admissible path $\zeta=g(t)$ (independent of $H$) can always be chosen to satisfy \begin{equation}\label{E1}1-2t^2|g(t) '|^2_{g(t)}>0 \,\,\,\,\,\, \text{on} \,\,\, \sphere^2\times [0,1], \end{equation} and if we then choose any constant $C$ satisfying \begin{equation}\label{EE1}C>\max_{\mathbb{S}^2\times[0,1]}\frac{\sqrt{2t|g '|^2_{g}}}{1-\sqrt{2t^2|g '|^2_{g}}},\end{equation} then $\zeta$ will be $(g, H)$-admissible provided $H$ satisfies \begin{equation}\label{EEE1}H\geq \max_{\mathbb{S}^2\times[0,1]}\sqrt{\frac{4K_{g(t)}(1+C\sqrt{t})C^2}{2t|g'|^2(1+C\sqrt{t})^{2}-C^2}}\end{equation}
(where the definition of $C$ guarantees the positivity of the denominator) and in this case we have the estimate $D(g, H)\leq C$.

%\begin{equation}\label{ee0}\max_{\mathbb{S}^2\times[0,1]}\frac{\sqrt{2t|g'|^2_{g}}}{\sqrt{\frac{K_{g(t)}+H^2}{ H^2}}-\sqrt{2t^2|g'|^2_{g}}}\leq %D(g, H)+\epsilon\end{equation}
\end{enumerate}

\end{prop}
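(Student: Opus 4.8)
The plan is to prove Proposition \ref{p2} by exhibiting, in each case, an explicit $g$-admissible path and then verifying the defining inequality \eqref{ghadmiss} for a suitable choice of the auxiliary constant $C$. The natural candidate for the path is the linear interpolation $g(t) = (1-t)g + t g_{\text{round}}$ rescaled so that $\tr_g g' \equiv 0$; more concretely, if $g_{\text{round}}$ denotes the round metric with the same area as $g$, then writing $g$ in a global conformal gauge (which exists on $\mathbb{S}^2$ by uniformization) one can take a path of the form $g(t) = e^{2\varphi(t)}g_0$ with $\varphi(t)$ chosen linearly in $t$ and normalized so that the area-preserving / trace-free condition in Definition \ref{def0}(b) holds. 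The first step is therefore to fix such a path and record that $g(1)$ is round, that the trace condition holds, and that $|g'|^2_{g(t)}$ and $K_{g(t)}$ are smooth bounded functions on $\mathbb{S}^2 \times [0,1]$.

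For part 1, assume $K_g \geq 0$. The key observation is that along the chosen conformal path one can arrange $K_{g(t)} \geq 0$ for all $t$ (because convexity of the relevant functional, or a direct computation with the conformal Laplacian, keeps the curvature non-negative when one interpolates toward the round metric). Granting this, the left side of \eqref{ghadmiss} is bounded below by
\[
C^2 - 2t|g'|^2(1+C\sqrt{t})^2,
\]
since the $K_{g(t)}$ term is non-negative. I would rewrite the stated stronger inequality $\frac{4K_{g(t)}+H^2}{H^2} > \sqrt{2t^2|g'|^2_{g(t)}}$: because $K_{g(t)}\geq 0$ this reduces to $1 > \sqrt{2t^2|g'|^2}$ after absorbing the curvature, which holds for $t$ small and can be forced globally by rescaling the conformal factor so that $|g'|^2$ is small (shrinking the "speed" of the path; note the normalization is scale-invariant in the right way). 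Then, given such a path, choosing $C$ strictly larger than the right side of \eqref{Destimate1} makes $\frac{4C^2}{H^2}K_{g(t)}(1+C\sqrt t) + C^2 > 2t|g'|^2(1+C\sqrt t)^2$ by a short algebraic manipulation dividing through by $(1+C\sqrt t)$ and using $K_{g(t)}\geq 0$ together with the definition of the maximum; taking the infimum over admissible $C$ yields the estimate $D(g,H) \leq \max \frac{\sqrt{2t|g'|^2}}{\sqrt{(4K_{g(t)}+H^2)/H^2} - \sqrt{2t^2|g'|^2}}$.

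For part 2, the path is chosen once and for all, independent of $H$, so that \eqref{E1} holds — again this is achieved by rescaling the conformal factor to slow the path down, making $|g'|^2_{g(t)}$ uniformly small, which is possible because the only constraints (Definition \ref{def0}) are on direction not speed. Fixing any $C$ as in \eqref{EE1}, the denominator $1 - \sqrt{2t^2|g'|^2}$ is positive by \eqref{E1}, and one checks that \eqref{EEE1} is exactly the condition that rearranges \eqref{ghadmiss}: solving $\frac{4C^2}{H^2}K_{g(t)}(1+C\sqrt t) + C^2 \geq 2t|g'|^2(1+C\sqrt t)^2$ for $H^2$ gives $H^2 \geq \frac{4K_{g(t)}(1+C\sqrt t)C^2}{2t|g'|^2(1+C\sqrt t)^2 - C^2}$ at points where the denominator is positive, and at points where it is non-positive the inequality \eqref{ghadmiss} holds automatically; taking the max over $\mathbb{S}^2\times[0,1]$ of the right side (which is finite since the numerator is bounded and the denominator is bounded below away from zero by the choice of $C$ via \eqref{EE1}) gives the threshold in \eqref{EEE1}, after which $\zeta$ is $(g,H)$-admissible and hence $D(g,H) \leq C_\zeta \leq C$.

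The main obstacle I anticipate is step one in each part: constructing the path so that the trace-free normalization in Definition \ref{def0}(b) is compatible both with $K_{g(t)}\geq 0$ throughout (part 1) and with making $|g'|^2_{g(t)}$ uniformly small (parts 1 and 2). The trace condition $\tr_g g'\equiv 0$ is with respect to the fixed initial metric $g$, not the moving metric, so a naive conformal interpolation $g(t)=e^{2t\psi}g$ does not satisfy it; one must project out the trace part at each time or reparametrize, and then re-verify curvature positivity is preserved. I would handle this by working with the trace-free part of $\dot\varphi$ directly and invoking uniformization plus a continuity/compactness argument on $[0,1]$ to get the required uniform bounds; the curvature-positivity propagation is the delicate analytic point and may require the Gauss–Bonnet constraint and a maximum principle for the equation satisfied by $K_{g(t)}$ along the path.
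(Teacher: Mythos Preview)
Your proposal has a genuine gap, and it lies precisely where you flagged the difficulty yourself: the construction of the path.

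First, a conformal interpolation cannot work. A path of the form $g(t)=e^{2\varphi(t)}g_0$ has $g'(t)=2\dot\varphi(t)\,g(t)$, so $\tr_{g(t)}g'(t)=4\dot\varphi(t)$; the trace-free condition forces $\dot\varphi\equiv 0$ and the path is constant. ``Projecting out the trace part'' of a conformal variation leaves nothing. The paper does not attempt anything of the sort: it simply invokes the Mantoulidis--Schoen construction \cite{MS}, which produces $h(t)=\xi_t^*\bigl(e^{2\alpha(t)w+a(t)}g_*\bigr)$ with a moving family of diffeomorphisms $\xi_t$ chosen precisely so that the trace-free (area-preserving) condition holds. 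That construction also comes with the curvature property $K_{h(t)}>0$ for $t>0$ when $K_g\ge 0$, which is what replaces your vague ``convexity / maximum principle'' argument.

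Second, and more seriously, your mechanism for satisfying the inequality is wrong. You write that one can ``slow the path down, making $|g'|^2_{g(t)}$ uniformly small,'' because ``the only constraints are on direction not speed.'' But the constraint $g(0)=g$, $g(1)$ round, on the fixed interval $[0,1]$, forces $\sup_t|g'|_{g(t)}$ to be bounded below by the distance from $g$ to the round metrics; you cannot make $|g'|^2$ uniformly small. What the paper does instead is exploit the explicit factor of $t$ multiplying $|g'|^2$ in \eqref{ghadmiss}. Starting from an MS path $h(s)$ that is constant near $s=0$, it sets
\[
h_c(t)=\begin{cases} g, & 0\le t\le e^{-1/c},\\ h(c\log t+1), & e^{-1/c}\le t\le 1,\end{cases}
\]
so that $t\,h_c'(t)=c\,h'(s)$ and hence $t^2|h_c'|^2_{h_c}=c^2|h'|^2_h$ is uniformly as small as one likes by shrinking $c$. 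This logarithmic reparametrization is the key idea: it makes $2t^2|g'|^2$ small without touching $|g'|^2$ itself, which is exactly what the displayed inequalities in parts 1 and 2 require. Once this is in hand, your algebraic rearrangements (the derivation of \eqref{Destimate1} in part 1 and of \eqref{EEE1} in part 2) are essentially the same as the paper's and are correct.
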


\begin{proof}

 Given any smooth metric $g$, we may write $g=e^{w(x)}g_*$ for a round metric $g_*$ with area $4\pi$.  The construction in \cite{MS} (Proposition 1.1 and Lemma 1.2) shows that given any smooth non-increasing function $\alpha(t):[0, 1]\to [0, 1]$ with $\alpha(0)=1$ and $\alpha(1)=0$, a function $a(t)$ and a family of diffeomorphisms $\xi_t:\mathbb{S}^2\to \mathbb{S}^2 $ can be chosen so that $\zeta=h(t):=\xi_t ^*(e^{2\alpha(t)w(x)+a(t)}g_*)$ is a $g$-admissible path.  In particular, choosing $\alpha(t)=1$ in a neighborhood of $t=0$ ensures that $a(t)$ must be constant, that $\xi_t$ remains constant by its construction, and thus  $h$ remains constant for all $t>0$ sufficiently small.  Fix such a path and consider the modified path of metrics $h_c(t)$ for $t\in [0, 1]$, defined as 
$$
h_c(t)=
\begin{cases}
g &0 < t\leq e^{-\frac{1}{c}}\\
h(c\log t+1) & e^{-\frac{1}{c}}\leq \ t\leq 1
\end{cases}
$$
First, we note that since $h(t)$ is assumed to be constant in a neighborhood of $t=0$ it follows that  $h_c(t)$ is smooth on $\mathbb{S}^2\times[0,1]$ for all $c>0$.  By the facts that $h(t)$ itself is admissible, and that $h_c(t)$ is just a re-parametrization of $h(t)$ we see that $h_c(t)$ is $g$ admissible as in Definition \ref{def0}.  We now proceed to complete the proof of each part of the Proposition separately.

$\mathbf{Part\, 1.}$  In this case we shrink $c$ if necessary so that $$0<c<\sqrt{ \frac{\min_{\mathbb{S}^2\times[0,1]}4K_{h_c(t)} +H^2}{ 2H^2\max_{\mathbb{S}^2\times[0,1]}  |h'(s)|^2_{h(s)}}}$$ which gives
\begin{equation}\label{admissible1}
\begin{split}
\max_{\mathbb{S}^2\times[0,1]} 2t^2 |h_c'(t)|_{h_c(t)}^2=\max_{\mathbb{S}^2\times [e^{-1/c}, 1]} 2t^2 |h_c'(t)|_{h_c (t)}^2&=\max_{\mathbb{S}^2\times[0,1]} c^2 2 |h'(s)|^2_{h(s)}<\frac{\min_{\mathbb{S}^2\times[0,1]}4K_{h_c(t)} +H^2}{H^2} \\
\end{split}
\end{equation}
and thus allows us to define 
\begin{equation}\label{C(g, H)}C:=\max_{\mathbb{S}^2\times[0,1]}\frac{\sqrt{2t|h_c '|^2_{h_c}}}{\sqrt{\frac{4K_{h_c(t)}+H^2}{ H^2}}-\sqrt{2t^2|h_c '|^2_{g}}}\end{equation}
which in turn implies 
\begin{align*}
0&\leq K_{h_c (t)}\frac{4C^2}{ H^2}-2t|h_c '|_{h_c}^2(1+C \sqrt{t})^{2}+C^2 \,\,\,\,\,\, \text{on} \,\,\, \mathbb{S}^2\times[0,1]\\
\end{align*}
 which is in fact a stronger inequality than \eqref{ghadmiss} thus making $h_c(t)$ a $(g, H)$ admissible path and also establishing the estimate for $D(g, H)$ in \eqref{Destimate1}.

$\mathbf{Part\, 2.}$ In this case, by the calculation in \eqref{admissible1} we see that we may shrink $c$ if necessary so that the $g$-admissible path $h_c(t)$ also satisfies \eqref{E1}.  If we take $C$ as in \eqref{EE1}, then 
$$-2t|h_c '|_{h_c}^2+(1+C \sqrt{t})^{-2}C^2 >0 \,\,\,\,\,\, \text{on} \,\,\, \mathbb{S}^2\times[0,1]$$ and it follows that when $H$ satsifies \eqref{EEE1} we have
$$K_{h_c (t)}(1+C \sqrt{t})^{-1}\frac{4C^2}{ H^2}-2t|h_c '|_{h_c}^2+(1+C \sqrt{t})^{-2}C^2 >0 \,\,\,\,\,\, \text{on} \,\,\, \mathbb{S}^2\times[0,1]$$ making $h_c(t)$ a $(g, H)$-admissible path and also establishing the stated estimate for $D(g, H)$.

\end{proof}

\begin{remark}\label{rem2}
The construction of the $g$-admissible path $\zeta=h(t)$ from \cite{MS} (Proposition 1.1 and Lemma 1.2) referred to in the proof of Proposition \ref{p2} implies that

\begin{enumerate}
\item [1] if $g$ is arbitrarily close to a fixed round metric on $\mathbb{S}^2$ (in each $C^k$ norm relative to a fixed metric), then $|g(t)'|^2$ will be arbitrarily close to $0$ on   $\mathbb{S}^2\times[0,1]$.
\item[2] if $K_g \geq 0$, then $K_{h(t)} >0 $ for all $t>0$.
\end{enumerate}

 It follows that if either $g$ is arbitrarily close to a fixed round metric on $\mathbb{S}^2$, or $H$ is arbitrarily large, the inequality
$$\frac{4C^2}{H^2} K_{g(t)} (1+C\sqrt{t})-2t|g'|^2(1+C\sqrt{t})^{2}+C^2 >0\,\,\,\,\,\, \text{on} \,\,\, \sphere^2\times [0,1]$$
will be satisfied for $C>0$ arbitrily small, and thus  $D(g, H)$ from definition \eqref{def1} will be arbitrarily close to $0$.

\end{remark}

\begin{prop}\label{p1} Let $g$ be a smooth metric and $H>0$ 
a constant such that there exists a $(g, H)$-admissible path.   Then given any $\epsilon>0$, a $(g, H)$-admissible path $\zeta=g(t)$ can be chosen so that $$C_{\zeta} \leq D(g, H)+\epsilon$$ and also $g(t)=g(1)$ for all $t\in [1-\theta, 1]$ for some $\theta >0$.   Moreover, 
\end{prop}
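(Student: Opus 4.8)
The plan is to start from a near-optimal $(g,H)$-admissible path, first slow it down near $t=1$ by a gentle reparametrization so that the metric and its $t$-velocity become tiny perturbations of a round metric there, and then truncate the path so that it equals \emph{exactly} that round metric near $t=1$; the content of the argument is that each modification costs only an arbitrarily small increase in $C_\zeta$. Concretely, since $D(g,H)=\inf_\zeta C_\zeta$ is a nested infimum, I would first pick a $(g,H)$-admissible path $\zeta_0=g_0(t)$ with $C_{\zeta_0}\le D(g,H)+\epsilon/2$ and then a constant $C_0\le C_{\zeta_0}+\epsilon/2\le D(g,H)+\epsilon$ for which \eqref{ghadmiss} holds for $g_0$; by compactness of $\sphere^2\times[0,1]$, \eqref{ghadmiss} then holds for $(g_0,C_0)$ with a uniform positive margin $\delta_0$. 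It remains to modify $g_0$ into a path that is constant near $t=1$ and still satisfies \eqref{ghadmiss} with the same $C_0$ (up to a loss absorbable into the already-spent $\epsilon$).

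The first modification is a reparametrization $g_1:=g_0\circ\phi_1$, where $\phi_1\colon[0,1]\to[0,1]$ is smooth and nondecreasing with $\phi_1(0)=0$, $\phi_1(1)=1$, $\phi_1(t)\ge t$, $\max\phi_1'\le 1+\nu$, and $\phi_1'<\epsilon'$ on a fixed interval $[1-\rho,1]$; such $\phi_1$ exists because one can keep $\phi_1$ close to the identity, let $\phi_1'$ drop to the small value $\epsilon'$ over a very short window near $1-\rho$, and coast afterwards, so that the compensating excess of $\phi_1'$ above $1$, being spread over nearly all of $[0,1]$, amounts only to $\nu=O(\rho+\text{window length})$. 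Reparametrization preserves $g$-admissibility ($g_1(0)=g$, $g_1(1)=g_0(1)$ is round, and $\tr_g g_1'=\phi_1'\,(\tr_g g_0')\!\circ\!\phi_1=0$); it multiplies the term $2t|g'|^2$ in \eqref{ghadmiss} by $(\phi_1')^2\le(1+\nu)^2$; and, since $\phi_1(t)\ge t$, it shifts each $\sqrt t$ occurring in \eqref{ghadmiss} by at most $\sqrt{\phi_1(t)-t}\le\sqrt\nu$ in the favorable direction. Both effects are absorbed by $\delta_0$ once $\nu$ is small, so \eqref{ghadmiss} holds for $(g_1,C_0)$. Moreover, on $[1-\rho,1]$ we have $\phi_1(t)\in[1-\epsilon'\rho,1]$ and $\phi_1'<\epsilon'$, so $g_1(t)=g_0(\phi_1(t))$ is $O(\epsilon')$-close to the round metric $g_1(1)=g_0(1)$ in every $C^k$, with $|g_1(t)-g_1(1)|=O(\epsilon'(1-t))$, and $g_1'(t)=(g_0'\!\circ\!\phi_1)(t)\,\phi_1'(t)=O(\epsilon')$ there.

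The second modification truncates: for $\theta\in(0,\rho)$ and a cutoff $\eta$ (equal to $1$ near $1-\theta$ and to $0$ near $1-\theta/2$), set $g_2(t)=g_1(t)$ on $[0,1-\theta]$, $g_2(t)=\eta(t)g_1(t)+(1-\eta(t))g_1(1)$ on $[1-\theta,1-\theta/2]$, and $g_2(t)=g_1(1)$ on $[1-\theta/2,1]$. Then $g_2$ is smooth, $g_2(0)=g$, $g_2(1)=g_1(1)$ is round, and $\tr_g g_2'=\eta'\,\tr_g\!\big(g_1(t)-g_1(1)\big)+\eta\,\tr_g g_1'=0$ because $\tr_g g_1(t)\equiv 2\equiv\tr_g g_1(1)$. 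Inequality \eqref{ghadmiss} for $(g_2,C_0)$ is the $g_1$-case on $[0,1-\theta]$ and the round-metric case (only the positive $K$- and $C_0^2$-terms survive) on $[1-\theta/2,1]$; on the transition interval, $g_2(t)$ is $O(\epsilon')$-close to $g_1(1)$ in $C^2$, so $K_{g_2(t)}\ge\tfrac12 K_{g_1(1)}>0$, while $g_2'(t)=\eta'(t)\big(g_1(t)-g_1(1)\big)+\eta(t)g_1'(t)$ has size $O(\epsilon')$ since the factor $|\eta'|=O(1/\theta)$ is exactly killed by $|g_1(t)-g_1(1)|=O(\epsilon'(1-t))=O(\epsilon'\theta)$; hence \eqref{ghadmiss} holds there once $\epsilon'$ is small. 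Relabelling $\theta/2$ as $\theta$, the path $\zeta=g_2(t)$ is $(g,H)$-admissible, satisfies $C_\zeta\le C_0\le D(g,H)+\epsilon$, and equals $g_2(1)$ on $[1-\theta,1]$, which is the assertion; the further clause in the statement (e.g.\ that $g(t)\equiv g$ on $[0,\theta]$ as well) is obtained by running the identical reparametrize-and-truncate procedure at the endpoint $t=0$.

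The step I expect to be the main obstacle is the reparametrization: one needs the velocity $g_1'$ to be uniformly small on a \emph{fixed} neighborhood of $t=1$ (so that the subsequent truncation costs little) while keeping $\max\phi_1'\le 1+\nu$ (so that \eqref{ghadmiss} survives with the \emph{same} constant $C_0$), and these pull against each other, since $\phi_1'$ dropping below $1$ near $t=1$ forces $\phi_1'>1$ somewhere to compensate. The resolution is the elementary but essential observation that concentrating that drop in an arbitrarily short window makes the required compensating excess, spread over almost all of $[0,1]$, arbitrarily small; once this is in hand, everything else is routine margin-chasing built on the strictness of \eqref{ghadmiss} over the compact set $\sphere^2\times[0,1]$.
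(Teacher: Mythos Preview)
Your two-step scheme (slow down, then truncate) is more elaborate than necessary, and the truncation step carries a genuine gap.  The condition in Definition~\ref{def0}(b) is $\tr_{g(t)} g'(t)\equiv 0$ (equivalently, the area form $dV_{g(t)}$ is constant in $t$; this is exactly what is used in the scalar-curvature computation \eqref{scalar}).  Your verification that $g_2$ is $g$-admissible, namely ``$\tr_g g_2'=\eta'\,\tr_g(g_1(t)-g_1(1))+\eta\,\tr_g g_1'=0$ because $\tr_g g_1(t)\equiv 2\equiv \tr_g g_1(1)$,'' implicitly traces against a \emph{fixed} background metric.  With the correct trace $\tr_{g_2(t)}$, neither $\tr_{g_2(t)}g_1'(t)=0$ nor $\tr_{g_2(t)}g_1(t)=2$ holds; a convex combination of two metrics with equal area forms does not in general have that same area form.  (One can check that the defect is $O(\epsilon'^2)$ and repair it by a conformal rescaling $g_2\mapsto \lambda(x,t)g_2$ with $\lambda=1+O(\epsilon'^2)$, but this step is missing.)

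The paper avoids this issue entirely by doing everything with a \emph{single} reparametrization: choose a smooth nondecreasing $\sigma_\theta:[0,1]\to[0,1]$ with $\sigma_\theta(t)=t/(1-2\theta)$ on $[0,1-3\theta]$, $\sigma_\theta\equiv 1$ on $[1-\theta,1]$, and $0\le\sigma_\theta'\le 1/(1-2\theta)$, and set $g_\theta(t):=g(\sigma_\theta(t))$.  Since $\sigma_\theta$ actually \emph{reaches} $1$ at $t=1-\theta$ and stays there, $g_\theta(t)=g(1)$ on $[1-\theta,1]$ with no interpolation needed; and because $g_\theta$ is a pure reparametrization, $\tr_{g_\theta(t)}g_\theta'(t)=\sigma_\theta'(t)\,\tr_{g(\sigma_\theta(t))}g'(\sigma_\theta(t))=0$ automatically.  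The only thing to check is \eqref{ghadmiss} for $g_\theta$ with the \emph{same} $C$, and since $t/\sigma_\theta(t)\to 1$ uniformly and $\sigma_\theta'\le 1/(1-2\theta)\to 1$ as $\theta\to 0$, this follows from the strict inequality for $g(t)$ by continuity.  In short, you manufactured the ``main obstacle'' yourself: by insisting that your reparametrization satisfy $\phi_1(1)=1$ with $\phi_1'>0$ near $1$, you were forced into a second truncation step where the admissibility condition is no longer automatic.
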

\begin{proof}
Let $\epsilon>0$ be given and consider a $(g, H)$-admissible path $\zeta=g(t)$ satisfying \eqref{ghadmiss} for some $C \leq D(g, H)+\epsilon$.  Namely,

\begin{equation}\label{ghadmissss}\frac{4C^2}{H^2} K_{g(t)} (1+C\sqrt{t})-2t|g'|^2(1+C\sqrt{t})^{2}+C^2>0 \,\,\,\,\,\, \text{on} \,\,\, \mathbb{S}^2\times[0,1]\end{equation}.

We will construct a family of $(g, H)$-admissible paths $g_{\theta}(t)$ satisfying $g_{\theta}(t)=g_{\theta}(1)$ for $t\in [1-\theta, 1]$ so that for all $\theta>0$ sufficiently small, \eqref{ghadmissss} is satisfied with the same constant $C$ but with $g(t)$ replaced with $g_{\theta}(t)$. For each $\theta\in (0, 1/3)$, we consider a smooth auxilary function $\sigma_\theta:[0,1]\to [0,1]$ satisfying
$$
\begin{cases}
&\sigma_\theta(t)=\frac{t}{1-2\theta}, \; \forall \; t\in [0,1-3\theta]\\
&\sigma_\theta(t)=1, \; \forall \; t\in [1-\theta,1]\\
&0\leq \sigma_\theta '(t)\leq \frac{1}{1-2\theta}, \; \forall \; t\in [0,1].
\end{cases}
$$
Such a function can be constructed by mollification as discussed in \cite{CMM}. Then the path $\zeta_\theta=g_\theta(t):=g(\sigma_\theta(t))$ satisfies $g_\theta(t)= g(1)$ for all $t\in [1-\theta, 1]$.  Moreover, we have
%$$
%|g_\theta'(t)|_{g_\theta(t)}^2=\sigma_\theta'(t)^2 |g'(\sigma_\theta(t))|_{g(\sigma_\theta(t))}^2, 
%$$

\begin{equation}
\begin{split}
2t|g_\theta'(t)|_{g_\theta(t)}^2&=2t\sigma_\theta'(t)^2 |g'(\sigma_\theta(t))|_{g(\sigma_\theta(t))}^2\\
&=  \frac{\sigma_\theta'(t)^2t}{\sigma_\theta(t)}(2\sigma_\theta(t) |g'(\sigma_\theta(t))|_{g(\sigma_\theta(t))}^2)\\&\leq \frac{t}{\sigma_\theta(t)(1-2\theta)^2}(2\sigma_\theta(t) |g'(\sigma_\theta(t))|_{g(\sigma_\theta(t))}^2)\\
\end{split}
\end{equation}
and thus letting $s=\sigma_\theta(t)$, we have
\begin{equation}\label{ghadmisssssss}\begin{split}&\frac{4C^2}{H^2} K_{g_\theta(t)} (1+C\sqrt{t})-2t|g'_\theta(t)|_{g(\sigma_\theta(t))}^2(1+C\sqrt{t})^{2}+C^2\\
&\geq\frac{4C^2}{H^2} K_{g(s)} (1+\frac{\sqrt{t}}{\sqrt{\sigma_\theta(t)}}C\sqrt{s})- \frac{t}{s(1-2\theta)^2}(2s |g'(s)|_{g(s)}^2)(1+\frac{\sqrt{t}}{\sqrt{\sigma_\theta(t)}}C\sqrt{s})+C^2\end{split}\end{equation} 
On the other hand, as $ t/\sigma_{\theta}(t)\to 1$ uniformly for $t\in [0, 1]$ as $\theta \to 0$ it follows from \eqref{ghadmissss} that for $\theta>0$ sufficiently small, the expression on the LHS of \eqref{ghadmisssssss} is positive for $t\in [0, 1]$, thus making $g_\theta(t)$ a $(g, H)-$admissible path.

\end{proof}

\section{collar metrics on $(\sphere^2\times [0,1], \gamma)$ and an extension result}

 The approach used in \cite{MS} to constructing admissible extensions of a given metric $g$ on $\sphere^2$ is to first construct a suitable collar metric $\gamma$ on $\sphere^2\times [0,1]$ which extends $g$, then glue this isometrically to a Riemannian manifold/extension $(\sphere^2\times (b,\infty), \widetilde{g})$ for $0<b<1$ so that the ADM mass of the extension is suitably controlled.  We will follow this approach here.

 We first collect some basic formulas we will use for our collar construction.  Let $t\mapsto g(t)$ be a $g$-admissible path of metrics. Let $M=(\sphere^2\times [0,1], \gamma)$ where $$\gamma=E(t) g(t)+\Phi(t)^2 dt^2.$$ Write $h(t)=E(t) g(t)$ to simplify our notation to $\gamma=h(t)+\Phi(t)^2 dt^2$. 

\noindent \textbf{Mean Curvature:} Fix any $t\in (0,1]$. The mean curvature of the sphere $\sphere^2\times \{t\}$ as a submanifold of $(\Sigma, \gamma)$ is $H_t=\tr_{h(t)}(\rho)$ where $\rho=\langle N,\RNum{2}\rangle_\gamma$. Here $N=-\frac{1}{\Phi(t)} \frac{\partial}{\partial t}$ is the unit normal and $\RNum{2}$ is the second fundamental form. To calculate this, let $E_1, E_2$ be a local coordinate frame on $\sphere^2\times \{t\}$. Then
\begin{align*}
\rho(t)_{ij}
&=\langle N,\RNum{2}(E_i,E_j)\rangle_\gamma
=\gamma_{ab} N^a ((\tilde{\nabla}_{E_i} E_j )^\perp)^b
=\gamma_{tt} N^t ((\tilde{\nabla}_{E_i} E_j )^\perp)^t
\\&=\gamma_{tt}\frac{1}{-\Phi(t)}\tilde{\Gamma}_{ij}^t
=\gamma_{tt}\frac{1}{-\Phi(t)}(-\frac{1}{2}\gamma^{tt}\gamma_{ij;t})
=\frac{1}{2\Phi(t)}h_{ij;t}.
\end{align*}
Since $h(t)=E(t) g(t)$, we have
$$
\dot{h}=E'(t) g(t)+E(t) \dot{g}(t) \; \Lra \; \tr_h \dot{h}=E\inv \tr_g \dot{h}=2E'(t)E\inv(t). 
$$
Note that we used $\tr_g \dot{g}\equiv 0$ in the above calculation. Therefore on $\Sigma_t:=\sphere^2\times \{t\}$, the foliating sphere at time $t$ we have
\begin{equation}\label{meancurvature}
H_t=\tr_{h(t)}\rho(t)=\frac{1}{\Phi(t)} \tr_h(\dot{h})=\frac{1}{\Phi(t)}E'(t)E\inv(t).
\end{equation}

\noindent \textbf{Scalar curvature:} The scalar curvature of $\gamma$ is given by the formula \cite{MS}:
\begin{equation}
R_\gamma=2K_{h(t)}+\Phi^{-2}\left[-\tr_h h'' -\frac{1}{4}(\tr_h h')^2+\frac{3}{4}|h'|^2_h+\frac{\partial_t \Phi}{\Phi}\tr_h h' \right].
\end{equation}
By some basic calculations, we have 
$$
\tr_h h'= 2E\inv E', \;\; \text{ and } \; \; |h'|_h^2=E^{-2}[2(E')^2+E^2 |g'|_g^2].
$$
Using this, we have
$$
-\frac{1}{4}(\tr_h h')^2+\frac{3}{4}|h'|^2_h=-E^{-2}(E')^2+\frac{3}{2}E^{-2}(E')^2+\frac{3}{4}|g'|_g^2 =\frac{1}{2}E^{-2}(E')^2+\frac{3}{4}|g'|_g^2.
$$
We know that 
$$
K_{h(t)}=E(t)\inv K_{g(t)} .
$$
and we also have 
$$
\tr_h h''= 2E\inv E'' +\tr_g g'' \; \text{ and } \; 0=[(\tr_g g')]'=\tr_g g''-|g'|_g^2 \; \Lra\; \tr_g g''=|g'|_g^2.
$$
This gives
\begin{equation}\label{scalar}
\begin{split}
R_\gamma&= 2K_{h(t)}+\Phi^{-2}\left[-\tr_h h''  -\frac{1}{4}(\tr_h h')^2+\frac{3}{4}|h'|^2_h+\frac{\partial_t \Phi}{\Phi}\tr_h h' \right]
\\&= 2E\inv K_{g(t)}+\Phi^{-2}\left[- 2E\inv E'' -\tr_g g'' +\frac{1}{2}E^{-2}(E')^2+\frac{3}{4}|g'|_g^2+2E\inv E'\frac{\partial_t \Phi}{\Phi}\right]\\&=
\Phi^{-2}\left[2E\inv K_{g(t)}\Phi(t)^2- 2E\inv E''-\frac{1}{4}|g'|_g^2+\frac{1}{2}E^{-2}(E')^2+2E\inv E'\frac{\partial_t \Phi}{\Phi}\right]\\&=
E\inv\Phi^{-2}\left[2 K_{g(t)}\Phi(t)^2- 2E''-\frac{1}{4}E|g'|_g^2+\frac{1}{2}E^{-1}(E')^2+2 E'\frac{\partial_t \Phi}{\Phi}\right].
\end{split}
\end{equation}

\noindent \textbf{Bartnik Mass:} Recall that the Hawking mass of a foliating sphere $\Sigma_t$ is given by 
$$
\m_H(\Sigma_t,h(t),H_t)=\sqrt{\frac{|\Sigma_t|}{16\pi}}\left(1-\frac{1}{16\pi}\int_{\Sigma_t}H_t^2 \right)
$$
 Now using the facts that (1) the path $g(t)$ has constant pointwise area form, and (2) scaling a metric by a constant factor scales the area by the same constant factor, we have $|\Sigma_t|=4\pi r_g^2 E(t)$ for each $t\in [0,1]$. From our mean curvature calculations above, we have 
$$
H_t=\frac{E'(t)E(t)\inv}{\Phi(t)}.
$$
Then
$$
\m_H(\Sigma_0)=\sqrt{\frac{|\Sigma_0|}{16\pi}}\left(1-\frac{1}{16\pi}\int_{\Sigma_0}H_0^2 \right)=\frac{r_g}{2}\left(1-\frac{r_g^2 H^2}{4} \right).
$$
Similarly, 
\begin{equation}\label{hawkingmass}
\m_H(\Sigma_1)=\sqrt{\frac{|\Sigma_1|}{16\pi}}\left(1-\frac{1}{16\pi}\int_{\Sigma_1}H_1^2 \right)=\frac{r_g\sqrt{E(1)}}{2}\left(1-\frac{r_g^2E'(1)^2 E(1)^{-1} }{4\Phi(1)^2} \right).
\end{equation}

After constructing suitable collars, we will use the following gluing/extension result from \cite{CMM} to produce admissible extensions with control on their ADM mass.

\begin{proposition}\label{prop2.1}[Proposition 2.1 in \cite{CMM}]
Consider a smooth metric $\gamma=f(t) g(t)+ dt^2$ on a cylinder $(\sphere^2\times [0, 1], \gamma)$.  Suppose
\begin{enumerate}
\item  $\gamma$ has positive scalar curvature 
\item  $g(t)=g_*$ (the standard round metric) and $f'(t) >0$ for all $s\in [a, 1]$ for some $0<a<1$
\item $\sphere^2\times \{1\}$ has positive mean curvature $H_1$
\item $m_H(\sphere^2\times\{1\},f(1) g(1), H_1) \geq 0$.
\end{enumerate}
Then given any $\epsilon>0$ there exists a smooth rotationally symetric metric $\widetilde{\gamma}$ on the manifold with boundary $\sphere^2\times [a, \infty)$ such that
\begin{enumerate}
\item [(a)] for sufficiently large $c>a$, $(\sphere^2\times (c, \infty), \widetilde{\gamma})$ is isometric to an exterior region of the  Schwarzschild manifold of mass $m:= m_H(\sphere^2\times\{1\},f(1) g(1), H_1)+\epsilon$:
$$(\sphere^2\times (2m, \infty), r^2 g_* + \frac{1}{1-\frac{2m}{r}} dr^2)$$
\item [(b)] $\widetilde{\gamma}=\gamma$ on $\sphere^2\times [a, (a+1)/2)$
\end{enumerate}
and thus in particular we have 

$$\m_B(\sphere^2\times\{0\},f(0) g(0), H_0)\leq \m_H(\sphere^2\times\{1\},f(1) g(1), H_1)$$

\end{proposition}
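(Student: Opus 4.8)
The plan is to reduce the construction to a one–dimensional problem about the Hawking mass function of a rotationally symmetric metric, and then to extend that function past $t=1$ so as to cap off smoothly onto an exterior Schwarzschild region without ever forming a horizon.

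First I would reparametrize the collar near its outer end by the area radius. On $\sphere^2\times[a,1]$ the metric is rotationally symmetric, $\gamma=f(t)\,g_*+dt^2$, and since $f'>0$ there the area radius $\rho(t):=\sqrt{|\Sigma_t|/4\pi}$ — a fixed positive multiple of $\sqrt{f(t)}$ — is a legitimate coordinate. In this coordinate $\gamma$ becomes $\dfrac{d\rho^2}{1-2\mu(\rho)/\rho}+\rho^2\,g_*$ for a smooth function $\mu$ with $2\mu(\rho)<\rho$ throughout, and the standard rotationally symmetric identities give $R_\gamma=4\mu'(\rho)/\rho^2$ and that the Hawking mass of the coordinate sphere of radius $\rho$ is $\mu(\rho)$. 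Thus hypothesis (1) forces $\mu'>0$, hypothesis (4) says $\mu(\rho_1)=:m_0\ge0$ where $\rho_1:=\rho(1)$ and, by \eqref{hawkingmass}, $m_0=\m_H(\Sigma_1,f(1)g(1),H_1)$, and hypothesis (3) ($H_1>0$) gives the strict inequality $2m_0<\rho_1$; this slack at the outer boundary is the key quantitative input.

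Next I would extend the mass function across $t=1$. Keeping $\widetilde\gamma=\gamma$ on all of $\sphere^2\times[a,1]$ (so conclusion (b) holds), I extend $\mu$ to a smooth, non-decreasing function $\bar\mu$ on $[\rho_1,\infty)$ that agrees with $\mu$ to infinite order at $\rho_1$ (making the glued metric $C^\infty$ across $\Sigma_1$), equals $m_0+\epsilon$ for all large $\rho$, and satisfies $2\bar\mu(\rho)<\rho$ for every $\rho\ge\rho_1$. Then $\widetilde\gamma:=\dfrac{d\rho^2}{1-2\bar\mu(\rho)/\rho}+\rho^2\,g_*$ on the extended range is rotationally symmetric with $R_{\widetilde\gamma}=4\bar\mu'/\rho^2\ge0$ and coincides with the mass-$(m_0+\epsilon)$ Schwarzschild exterior wherever $\bar\mu\equiv m_0+\epsilon$, which gives conclusion (a). Producing such a $\bar\mu$ is the technical heart: the jet–matching forces $\bar\mu'(\rho_1)=\mu'(\rho_1)$, which may exceed $\tfrac12$ and so threatens $2\bar\mu<\rho$; the remedy is to let $\bar\mu'$ be large only on a short initial interval, of length controlled by the slack $\rho_1-2m_0>0$, then taper $\bar\mu'$ below $\tfrac12$, after which $\rho-2\bar\mu$ is non-decreasing and stays positive all the way out while $\bar\mu$ climbs the remaining fixed amount to $m_0+\epsilon$ and levels off. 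Carrying this out uniformly for every $\epsilon>0$ is, I expect, the main obstacle; everything else is bookkeeping. (Alternatively one first alters $f$ harmlessly on $[(a+1)/2,1]$, keeping $f(1),f'(1)$ — hence $m_0$ — fixed and $R_\gamma>0$, so as to make $\mu'(\rho_1)$ as small as desired, and then simply keeps $\bar\mu'<\tfrac12$ everywhere.)

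Finally, the metric $\bar\gamma$ on $\sphere^2\times[0,\infty)$ equal to $\gamma$ on $[0,1]$ and $\widetilde\gamma$ beyond is smooth, complete, asymptotically flat, has $R_{\bar\gamma}\ge0$ ($R_\gamma>0$ on $[0,1]$, $R_{\widetilde\gamma}\ge0$ on $[1,\infty)$), induces $f(0)g(0)$ with mean curvature $H_0$ on $\Sigma_0$, and is asymptotic to Schwarzschild of mass $m_0+\epsilon$, so $\m_{ADM}(\bar\gamma)=m_0+\epsilon$. Since $|\Sigma_t|$ is a fixed multiple of $f(t)$ and the foliation has non-decreasing area, $\Sigma_0$ is outer minimizing, so $\bar\gamma$ is an admissible extension of $(\Sigma_0,f(0)g(0),H_0)$ and $\m_B(\Sigma_0,f(0)g(0),H_0)\le m_0+\epsilon$. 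Letting $\epsilon\downarrow 0$ yields $\m_B(\Sigma_0,f(0)g(0),H_0)\le m_0=\m_H(\Sigma_1,f(1)g(1),H_1)$, as claimed.
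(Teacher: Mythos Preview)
The paper does not prove this proposition; it is quoted verbatim from \cite{CMM} (note the bracketed attribution ``[Proposition 2.1 in \cite{CMM}]'' and the absence of any proof environment). So there is no proof in the present paper to compare your attempt against.

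That said, your sketch is the standard argument used in \cite{CMM} and \cite{MS}: pass to the area--radius coordinate on the rotationally symmetric part, write the metric as $\frac{d\rho^2}{1-2\mu(\rho)/\rho}+\rho^2 g_*$ so that $R=4\mu'/\rho^2$ and $\m_H(\Sigma_\rho)=\mu(\rho)$, then extend $\mu$ smoothly and monotonically to the constant $m_0+\epsilon$ while maintaining $2\mu<\rho$; this yields a smooth extension that is exactly Schwarzschild of mass $m_0+\epsilon$ outside a compact set. Your identification of the delicate point---controlling $\bar\mu'$ near $\rho_1$ so that $2\bar\mu<\rho$ persists---and your two proposed remedies are both on target.

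One small gap in your final paragraph: you assert $\Sigma_0$ is outer minimizing because ``$|\Sigma_t|$ is a fixed multiple of $f(t)$ and the foliation has non-decreasing area,'' but the hypotheses only give $f'>0$ on $[a,1]$, not on $[0,a]$. In every application in this paper the area factor $E(t)$ is in fact increasing on all of $[0,1]$, so the issue never arises, but as written your argument for outer minimizing on the collar portion $[0,a]$ is not justified by the stated hypotheses alone.
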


\section{Proof of Theorem \ref{mainthm2}}

Theorem \ref{mainthm2} will follow immediately from the following Theorem, the defninition of $D(g, H)$ in Definition \ref{def1}, Proposition \ref{p1}.

\begin{theorem}\label{theorem1}
Let $g$ be a Riemannian metric on $\mathbb{S}^2$ and $H>0$ a constant and  let $\zeta=g(t)$ be a $(g, H)$ admissible path, so that in particular,

\begin{equation}\label{ghadmissssss}\frac{4C^2}{H^2} K_{g(t)} (1+C\sqrt{t})-2t|g'|^2(1+C\sqrt{t})^{2}+C^2>0\,\,\,\,\,\, \text{on} \,\,\, \mathbb{S}^2\times[0,1]\end{equation}  for some $C>0$.  Suppose further that $g(t)=g(1)$ for $t<1$ sufficiently close to $1$.

Then 
$$
\m_B(\sphere^2,g,H) \leq \max\left( \frac{r_g\sqrt{1+C}}{2}\left[1-\frac{r_g^2H^2}{4(1+C)}\right], \,\, 0 \right).
$$
 
\end{theorem}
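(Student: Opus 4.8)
The plan is to manufacture from the given $(g,H)$-admissible path $\zeta=g(t)$ an explicit collar metric $\gamma=E(t)\,g(t)+\Phi(t)^2\,dt^2$ on $\sphere^2\times[0,1]$ which has positive scalar curvature, restricts to $g$ with mean curvature $H$ on $\sphere^2\times\{0\}$, is round on a neighborhood of $\sphere^2\times\{1\}$, and whose outermost Hawking mass equals $\tfrac{r_g\sqrt{1+C}}{2}\big[1-\tfrac{r_g^2H^2}{4(1+C)}\big]$; one then feeds this collar into Proposition \ref{prop2.1}. The shape of $E$ and $\Phi$ is forced by two requirements. First, we ask that $H_t\equiv H/E(t)$, equivalently $E'=H\Phi$; by \eqref{meancurvature} this yields $\int_{\Sigma_t}H_t^2=4\pi r_g^2H^2/E(t)$, and hence, since the path has constant pointwise area form, $\m_H(\Sigma_t)=\tfrac{r_g\sqrt{E(t)}}{2}\big(1-\tfrac{r_g^2H^2}{4E(t)}\big)$. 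Second, with $E'=H\Phi$ the $\partial_t\Phi$ terms in \eqref{scalar} cancel exactly, leaving
\[
R_\gamma=E^{-1}\Phi^{-2}\Big[\,2K_{g(t)}\Phi^2-\tfrac14 E\,|g'|^2+\tfrac{H^2\Phi^2}{2E}\,\Big],
\]
and we want this bracket to be a positive multiple of the admissibility expression in \eqref{ghadmissssss}. Matching the $K_{g(t)}$ and $|g'|^2$ terms (subject to the normalization $E(0)=1$) dictates
\[
E(t)=1+C\sqrt{t},\qquad \Phi(t)=\frac{C}{2H\sqrt{t}}\qquad(\text{note } E'=\tfrac{C}{2\sqrt t}=H\Phi),
\]
and a direct computation then gives
\[
R_\gamma=\frac{H^2}{2C^2(1+C\sqrt t)^2}\Big[\tfrac{4C^2}{H^2}K_{g(t)}(1+C\sqrt t)-2t\,|g'|^2(1+C\sqrt t)^2+C^2\Big],
\]
which is strictly positive on $\sphere^2\times[0,1]$ precisely because $\zeta$ is $(g,H)$-admissible.

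This collar degenerates at $t=0$ (where $\Phi\to\infty$), so the next step is to pass to the arc-length variable $s=\tfrac{C}{H}\sqrt t$, under which $t=H^2s^2/C^2$, the radial part becomes $\Phi^2dt^2=ds^2$, and $E$ becomes the affine function $1+Hs$; thus the collar reads
\[
\gamma=(1+Hs)\,g\!\left(\tfrac{H^2s^2}{C^2}\right)+ds^2\qquad\text{on }\sphere^2\times\big[0,\tfrac{C}{H}\big].
\]
Since $s\mapsto H^2s^2/C^2$ is a smooth function vanishing to second order at $s=0$ and $t\mapsto g(t)$ is a smooth family, this is a genuine smooth metric; it restricts to $g$ with mean curvature $H$ on $\sphere^2\times\{0\}$, it coincides with $(1+Hs)\,g(1)+ds^2$ for $s$ near $\tfrac{C}{H}$ (because $g(t)=g(1)$ for $t$ near $1$, by hypothesis), each slice $\Sigma_s$ has mean curvature $\tfrac{H}{1+Hs}>0$, and by the first requirement above $\m_H(\Sigma_s)=\tfrac{r_g\sqrt{1+Hs}}{2}\big(1-\tfrac{r_g^2H^2}{4(1+Hs)}\big)$, which is monotone increasing in $s$ and equals $\tfrac{r_g\sqrt{1+C}}{2}\big[1-\tfrac{r_g^2H^2}{4(1+C)}\big]$ at $s=\tfrac{C}{H}$.

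Now split into cases. If $\m_H(\Sigma_{C/H})\geq 0$, the collar just constructed meets all four hypotheses of Proposition \ref{prop2.1}, so $\m_B(\sphere^2,g,H)\leq\m_H(\Sigma_{C/H})=\tfrac{r_g\sqrt{1+C}}{2}\big[1-\tfrac{r_g^2H^2}{4(1+C)}\big]$. If $\m_H(\Sigma_{C/H})<0$, extend the collar beyond $s=\tfrac{C}{H}$ by keeping the round cross-section $g(1)$ and the same affine warping $1+Hs$; on this portion $R_\gamma=\tfrac{1}{1+Hs}\big[2K_{g(1)}+\tfrac{H^2}{2(1+Hs)}\big]>0$, the mean curvature stays positive, and $\m_H(\Sigma_s)\to+\infty$, so for any $\vep>0$ one may stop at some $s=S$ with $\m_H(\Sigma_S)=\vep$; Proposition \ref{prop2.1} then gives $\m_B(\sphere^2,g,H)\leq\vep$, whence $\m_B(\sphere^2,g,H)\leq 0$. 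In both cases $\m_B(\sphere^2,g,H)\leq\max\big(\tfrac{r_g\sqrt{1+C}}{2}[1-\tfrac{r_g^2H^2}{4(1+C)}],\,0\big)$, which is the claim.

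The only delicate computation is the scalar-curvature identity — the cancellation of the $\partial_t\Phi$ terms and the verification that the residue is exactly $\tfrac{H^2}{2C^2(1+C\sqrt t)^2}$ times the admissibility expression — together with checking that the reparametrized collar is smooth across $s=0$. The one structural caveat is that in the second case the collar has been lengthened beyond $\sphere^2\times[0,1]$; here one uses that Proposition \ref{prop2.1}, though stated over $\sphere^2\times[0,1]$, applies verbatim over $\sphere^2\times[0,L]$ for any $L>0$ (or one rescales the radial interval), since its extension is produced by a gluing localized near the two boundary spheres.
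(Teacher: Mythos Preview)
Your proof is correct and follows essentially the same construction as the paper: the collar $\gamma=E(t)g(t)+\Phi(t)^2dt^2$ with $E(t)=1+C\sqrt{t}$ and $\Phi(t)=\tfrac{C}{2H\sqrt{t}}$, the cancellation $-2E''+2E'\Phi'/\Phi=0$, the resulting identity $R_\gamma=\tfrac{H^2}{2C^2(1+C\sqrt{t})^2}\cdot[\text{admissibility expression}]$, and the reparametrization to the arc-length variable $s$ (yielding the smooth affine-warped collar $(1+Hs)g(H^2s^2/C^2)+ds^2$) all match the paper exactly.

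The one place you diverge is the case $\m_H(\Sigma_{C/H})<0$. The paper reruns the whole construction with $C$ replaced by the larger constant $c=\tfrac{H^2r_g^2}{4}-1$, tacitly assuming the admissibility inequality \eqref{ghadmissssss} persists when $C$ is increased. You instead continue the collar past $s=C/H$ with the \emph{same} affine warping $(1+Hs)g(1)+ds^2$---which is legitimate because $g(t)\equiv g(1)$ near $t=1$, so the extension is smooth---and stop once the Hawking mass climbs to $\varepsilon>0$. Your variant is arguably cleaner: it sidesteps any monotonicity-in-$C$ question (which is not entirely obvious when $K_{g(t)}$ is allowed to be negative, as it is in this theorem's hypotheses), at the mild cost of working on a longer cylinder and invoking Proposition~\ref{prop2.1} after a harmless rescaling of the radial interval, as you note.
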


\begin{proof} Let $g, H$ and $g(t)$ be as in the Theorem.  Consider the smooth Riemannian 3 manifold with boundary $\widetilde{M}=(\sphere^2\times [0,1], \gamma)$ where 
$$
\gamma=(1+2H s)g\left(\frac{s^2}{4A^2}\right)+ds^2
$$
where $C$ is defined in the Theorem and $A=\frac{C}{2H}$.  We will show that $R_\gamma\geq 0$ on $\widetilde{M}$ and that the mean curvature of $\sphere^2\times\{t\}$ in $\widetilde{M}$ is positive and approaches $H$ as $t\to 0$.  We will do this after changing variables to $t=s^2/(4A^2)$ on $M=(\sphere^2\times (0,1])$ in which case $\gamma$ takes the form
 $$\gamma=E(t) g(t)+\Phi(t)^2 dt^2$$
where $E(t)=1+C \sqrt{t}$ and $\Phi(t)= A/\sqrt{t}$.  In these coordinates, we will show that $R_\gamma\geq 0$ on $M$ while the mean curvature $H_t$ of the foliating spheres in $M$ are positive and approach $H$ as $t\to 0$.  
\vspace{10pt}

\noindent \textbf{Claim 1 (mean curvature): $H_t>0$ is positive and approaches $H$ as $t\to 0$:} 

By \eqref{meancurvature} and our definitions of $E$ and $\Phi$ we have
$$
H_t=\frac{E'(t)}{\Phi(t)E(t)}=\frac{\frac{C}{2\sqrt{t}}}{(\frac{C}{2H\sqrt{t}})(1+C \sqrt{t})}=\frac{H}{1+C \sqrt{t}}
$$
Clearly $H_t> 0$ for all $t\in (0,1]$ and $H_t\to H$ uniformly as $t\searrow 0$.\\

\noindent \textbf{Claim 2 (scalar curvature): $R_{\gamma}\geq 0$ on $M$}.

 From our definition of $E$ and $\Phi$ we have
$$
-2E\inv E''+2E\inv E'\frac{\partial_t \Phi}{\Phi}=E\inv \left[-2 \left(-\frac{C}{4}t^{-3/2}\right)+2\left(\frac{1}{2}C t^{-1/2}\right)\left(\frac{-At^{-3/2}}{2At^{-1/2}}\right)\right]\equiv 0.
$$
and it follows from \eqref{scalar} that 
\begin{equation}\begin{split}
R_\gamma &\geq 2E\inv K_{g(t)}+ \Phi^{-2}\left[-\frac{1}{4}|g'|_g^2+\frac{1}{2}E^{-2}(E')^2\right]\\&= 2(1+C \sqrt{t})^{-1} K_{g(t)}+\frac{4H^2t}{C^2}\left[-\frac{1}{4}|g'|_g^2+\frac{1}{2}(1+C \sqrt{t})^{-2}\frac{C^2}{4t}\right]\\
&= (1+C \sqrt{t})^{-2} \frac{H^2}{2C^2}\left(\frac{4C^2}{H^2} K_{g(t)} (1+C\sqrt{t})-2t|g'|^2(1+C\sqrt{t})^{2}+C^2\right).\\
&\geq 0
\end{split}\end{equation}
on $\sphere^2\times [0,1]$  by \eqref{ghadmissssss}.\\

\noindent \textbf{Hawking mass of $(\Sigma_1:=\sphere^2\times\{1\})$}:
From \eqref{hawkingmass} and our definition of $E$ and $\Phi$ we have
$$
\m_H(\Sigma_1)=\sqrt{\frac{|\Sigma_1|}{16\pi}}\left(1-\frac{1}{16\pi}\int_{\Sigma_1}H_1^2 \right)=\frac{r_g\sqrt{E(1)}}{2}\left(1-\frac{r_g^2E'(1)^2 E(1)^{-1} }{4\Phi(1)^2} \right)=\frac{r_g\sqrt{1+C}}{2}\left(1-\frac{r_g^2H^2}{4(1+C)}\right).
$$

In conclusion, we have estabished that the metric $$
\gamma=(1+\frac{C}{2A} s)g\left(\frac{s^2}{4A^2}\right)+ds^2
$$
on $(\sphere^2\times [0,1], \gamma)$, introduced at the beginning of the proof, satisfies the hypothesis of Proposition \ref{prop2.1}  provided $H< \frac{2\sqrt{1+C}}{r_g}$, in which case we conclude 
$$\m_B(\sphere^2, g, H)\leq \m_H(\Sigma_1)=\frac{r_g\sqrt{1+C}}{2}\left(1-\frac{r_g^2H^2}{4(1+C)}\right).
$$
Now suppose $H\geq \frac{2\sqrt{1+C}}{r_g}$ and thus
$$ c:=\frac{H^2r_g^2}{4}-1\geq C >0$$
Then if  we consider the smooth Riemannian 3 manifold with boundary $\widetilde{M}=(\sphere^2\times [0,1], \gamma)$ where 
$$
\gamma=(1+2H s)g\left(\frac{s^2}{4A^2}\right)+ds^2
$$
and $A=\frac{c}{2H}$, the exact same proof as above show that $\gamma$ satisfies Claims 1 $\&$ 2 while our definition of $c$ gives $$\m_H(\Sigma_1)=\frac{r_g\sqrt{1+c}}{2}\left(1-\frac{r_g^2H^2}{4(1+c)}\right)=0,$$and again using Proposition \ref{prop2.1} we conclude $\m_B(\Sigma_1)\leq 0$. 

 This concludes the proof of the Theorem.

\end{proof}

\section{Proof of Theorem \ref{mainthm}}

\begin{proof}[proof of Theorem \ref{mainthm}] If $K_g\geq 0$ and $H>0$ is arbitrary, then the pair $(g, H)$ will satisfy the hypothesis of Theorem \ref{mainthm2} by part 1 of Proposition \ref{p1}.  Moreover, in this case the constant $D(g, H)\to 0$ as either $H\to 0^+$ or else $g$ converges smoothly to a round metric as pointed out in remark \ref{rem2}.  Theorem \ref{mainthm} then follows from Theorem \ref{theorem2} below.
\end{proof}

\begin{theorem}\label{theorem2}
Let $g$ be a Riemannian metric on $\mathbb{S}^2$ with $K_g\geq 0$ and let $H>0$ be a constant.  Then 
$$
\m_B(\sphere^2,g,H) \leq r/2$$

\end{theorem}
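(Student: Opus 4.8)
The idea is to recycle the collar produced in the proof of Theorem \ref{theorem1} up to the point where the cross–section is the fixed round metric $g_*$ of area $4\pi r^2$, and then — rather than gluing on a Schwarzschild exterior right away, which only gives the bound $\m_H(\Sigma_1)=\tfrac{r\sqrt{1+C}}{2}(1-\tfrac{r^2H^2}{4(1+C)})$ and this may exceed $r/2$ — to \emph{prolong} the collar through a region in which the cross–section stays equal to $g_*$, so as to reach a round sphere whose Hawking mass is $\le r/2$ before invoking Proposition \ref{prop2.1}. Concretely, by Proposition \ref{p2}(1) a $(g,H)$–admissible path exists, so the first part of the proof of Theorem \ref{theorem1} yields a collar $\gamma=E(s)g(s)+ds^2$ on $\sphere^2\times[0,s_1]$ with $R_\gamma\ge 0$, whose inner boundary $\Sigma_0$ is isometric to $(\sphere^2,g)$ with mean curvature $H$, with cross–sectional area $4\pi r^2E(s)$, with $E$ nondecreasing and $E\ge 1$, and which near $s_1$ has the form $(1+\tfrac{C}{2A}s)g_*+ds^2$ with $g_*$ round of area $4\pi r^2$ and outer boundary of positive constant mean curvature.

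On $\sphere^2\times[s_1,s_3]$ we continue with $\gamma=E(s)g_*+ds^2$, where $E$ is a smooth function that agrees with $1+\tfrac{C}{2A}s$ near $s_1$, satisfies $E\ge 1$ everywhere, decreases to take (essentially) the value $1$ at some $s_*\in(s_1,s_3)$, and then increases to $E(s_3)=1+\delta$ with $E'(s_3)>0$. Since $K_{g_*}=1/r^2$, formula \eqref{scalar} with $\Phi\equiv 1$ reduces to $R_\gamma=E^{-1}\big[\tfrac{2}{r^2}-2E''+\tfrac12E^{-1}(E')^2\big]$, which is $\ge 0$ as soon as $E''\le \tfrac1{r^2}+\tfrac{(E')^2}{4E}$; for a function of the stated shape this is easily arranged by spreading the descent and the final ascent over a long enough $s$–interval (the right–hand side is bounded below by $1/r^2>0$, so the matching requirement $E''\equiv 0$ near $s_1$ is compatible). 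The outer sphere $\Sigma_{s_3}$ is then round of area $4\pi r^2(1+\delta)$ with constant mean curvature $H_3=E'(s_3)/(1+\delta)$, so
\[
\m_H(\Sigma_{s_3})=\frac{r\sqrt{1+\delta}}{2}\Big(1-\frac{r^2(1+\delta)H_3^2}{4}\Big),
\]
and choosing $\delta$ small together with a suitable small $H_3\in\big(0,\tfrac{2}{r\sqrt{1+\delta}}\big)$ makes $0\le \m_H(\Sigma_{s_3})\le r/2$.

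Relabelling $[0,s_3]$ as $[0,1]$, the resulting collar has the form $f(t)g(t)+dt^2$ and satisfies the hypotheses of Proposition \ref{prop2.1}: $R_\gamma\ge 0$ (and $>0$ where the prolongation occurs, strictness on the rest being obtainable by an arbitrarily small perturbation if needed), the cross–section equals $g_*$ with $f'>0$ near the outer end, $H_1=H_3>0$, and $\m_H(\Sigma_1)=\m_H(\Sigma_{s_3})\ge 0$. One must also check that $\Sigma_0=(\sphere^2,g)$ is outer minimizing in the glued manifold: every coordinate sphere of the collar has area $4\pi r^2E(s)\ge 4\pi r^2=|\Sigma_0|$ — this is where $E\ge 1$ is used — and in the Schwarzschild tail the coordinate–sphere areas increase outward, so $\Sigma_0$ minimizes area among enclosing surfaces. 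Proposition \ref{prop2.1} then gives $\m_B(\sphere^2,g,H)\le \m_H(\Sigma_{s_3})\le r/2$. (If the constant $C$ coming from Theorem \ref{theorem1} already satisfies $\m_H(\Sigma_1)\le r/2$, one may simply stop at $\Sigma_1$ as in the proof of Theorem \ref{theorem1}.)

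The main obstacle is the prolongation step, and within it the constraint $E\ge 1$. One cannot merely decrease the boundary mean curvature, because a collar whose inner end has larger mean curvature than its outer end is forced to have its cross–sectional area grow outward, which would instantly destroy the outer–minimizing property of $\Sigma_0$ if $E$ dipped below $1$; keeping $E\ge 1$ while still ending with $E'>0$ at the outer boundary (as Proposition \ref{prop2.1} demands) forces $E$ to turn around twice, and it is precisely the positivity of $R_\gamma$ through those turning points that requires $K_{g_*}>0$ — equivalently, the hypothesis $K_g\ge 0$ propagated to $K_{g(t)}>0$ for $t>0$ as in Remark \ref{rem2}. The remaining points (smoothness of the matching at $s_1$, and that no non–rotationally–symmetric surface beats $\Sigma_0$) are routine.
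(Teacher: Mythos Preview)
Your approach is genuinely different from the paper's. The paper does not recycle the Theorem~\ref{theorem1} collar at all; instead it builds a fresh collar $\gamma=(1+\epsilon t)\,g(t)+\Phi(t)^2\,dt^2$ with $E(t)=1+\epsilon t$ kept monotone and $\Phi(t)$ rising from $\epsilon/H$ to a large constant $A/4+\epsilon/H+1$. Non-negativity of $R_\gamma$ is obtained not from the $(g,H)$-admissibility inequality but from a linear lower bound $K_{g(t)}\ge ct$ (extracted from the explicit conformal form of the path when $K_g\ge 0$): for $t$ bounded away from $0$ the term $K_{g(t)}\Phi(t)^2$ dominates $|g'|^2$ once $A$ is large, while near $t=0$ the term $E'\,\partial_t\Phi/\Phi$ does the work. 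The Hawking mass of $\Sigma_1$ then tends to $r/2$ as $\epsilon\to 0$ and $A\to\infty$, and Proposition~\ref{prop2.1} finishes. Because $E'>0$ throughout, outer-minimizing of $\Sigma_0$ is immediate from the usual positive-mean-curvature-foliation barrier argument and needs no separate discussion.

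Your prolongation idea is workable, but the step you label ``routine'' is more delicate than you indicate: where $E$ is decreasing the coordinate spheres have \emph{negative} mean curvature, so the standard maximum-principle barrier argument for outer-minimizing does not apply, and merely checking coordinate-sphere areas is insufficient. What rescues you is that $\tr_{g}g'\equiv 0$ forces the area form $dA_{g(t)}$ to be independent of $t$, so the closed $2$-form $\pi^*dA_{g}$ (with $\pi$ the projection to $\sphere^2$) has comass $E(s)^{-1}\le 1$ throughout the collar and calibrates $\Sigma_0$; this yields $|\Sigma|\ge|\Sigma_0|$ for every homologous $\Sigma$, and the Schwarzschild tail is handled similarly. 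You should make this explicit rather than wave it away. Granting that, your route has the merit of making transparent exactly where $K_g\ge 0$ is used (to keep $R_\gamma\ge 0$ through the turning point of $E$), while the paper's construction is shorter, is independent of Theorem~\ref{theorem1}, and sidesteps the outer-minimizing subtlety entirely by never letting $E$ decrease.
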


\begin{proof}

  Fix any $\epsilon \leq 1$ and consider the smooth Riemannian 3 manifold with boundary $M=(\sphere^2\times [0,1], \gamma)$ where 
$$
\gamma=E(t)g(t) +\Phi(t)dt^2
$$
where $g(t)$ is an $g$-admissible path, $E(t)=(1+\ep t)$ and
$$
\Phi(t) = \left\{
     \begin{array}{lr}
       A t+\frac{\ep}{H} & t\leq\frac{1}{4}\\
       \phi(t)& : \frac{1}{4}\leq t\leq \frac{1}{2}\\
       \frac{A}{4}+\frac{\ep}{H}+1  & : \frac{1}{2}\leq t
     \end{array}
   \right.
$$
for a constant $A$ to be chosen sufficiently large below.  As described in the proof of Proposition \ref{p2} the $g$-admissible path can be taken to have the form $\xi_t^* (e^{2\alpha(t)w(x)+ a(t)}g_*)$ where $\alpha(t):[0,1] \to [0, 1]$ is smooth and non-increasing, $g=w(x)g_*$ for a round metric $g_*$ with area $4\pi$, and $\xi_t$ is a family of diffeomorphisms so that in particular, choosing $\alpha(t)$ constant for $t$ sufficiently close to $1$ implies that  $g(t)$ is also constant for $t>0$ sufficently close to $t=1$.  Moreover, we can also bound $K_{g(t)}$ from below linearly in $t$ as $K_{g(t)}\geq ct$ on $\mathbb{S}^2\times[0,1]$  for some $c>0$. To see why this is true, recall the relationship of Gauss curvature of conformal metrics: 
$$
K_{g(t)}\circ \xi_t=K_{e^{2\alpha(t)w(x)-2a(t)}g_*}=e^{2a(t)} K_{e^{2\alpha(t)w(x)}g_*}=e^{2a(t)-2\alpha(t)w(x)} (1-\alpha(t)\Delta_* w).
$$
We know that $\Delta_* w \leq 1$ on $\sphere$ as $K_{g(0)}\geq 0$. Letting $B=\inf_{t,x} e^{2a(t)-2\alpha(t)w(x)} $ gives
$$
K_{g(t)}\circ \xi_t\geq B (1-\alpha(t))=B(1-\alpha'(0)t +O(t^2))\gtrsim B\alpha'(0)t .
$$

Now \eqref{scalar} implies that in order to ensure $R_\gamma\geq 0$ it suffices to show
$$
ct\, \Phi(t)^2 -2C+\frac{1}{4}E\inv (E')^2+4E' \frac{\partial_t \Phi}{\Phi}\geq ct\, \Phi(t)^2-C+\ep \frac{\partial_t \Phi}{\Phi}\geq 0
$$
where $C:=\max_{\mathbb{S}^2\times[0,1]} \frac{1}{4}|g'|^2_{g}$.  To this end, let $\delta=\min\left\{\frac{1}{4}, \frac{\ep}{2C}\right\}$. Then since $\delta\leq 1/4$ we have $\Phi(t)\leq \Phi(\delta)$ for all $t\in [0,\delta]$.  If we choose  $A\geq 2C/H$ we may then estimate for all $t\in [0,\delta]$ as
$$
\frac{\partial_t \Phi(t)}{\Phi(t)}\geq \frac{A}{\Phi(\delta)}=\frac{A}{A\delta+\frac{\ep}{H}}\geq \frac{A}{A\frac{\ep}{2C}+\frac{\ep}{H}}=\left(\frac{C}{\ep}\right) \frac{\frac{A}{2}+\frac{C}{2}}{\frac{A}{2}+\frac{C}{H}}=\frac{C}{\ep}.
$$
thus implying $R_\gamma\geq 0$ for $t\in [0,\delta]$.
Now if we further choose $A\geq\sqrt{ \frac{C}{c\delta^3}}$, then for $t\in [\delta, 1]$, 
$$
ct \,\Phi(t)^2=ct \left(A\delta+\frac{\ep}{H}\right)^2\geq c\delta^3 A^2\geq c\delta^3\sqrt{ \frac{C}{c\delta^3}}^2 =C
$$
thus implying $R_\gamma\geq 0$ for $t\in [\delta,1]$. So whenever $A$ is sufficiently large (as described), we have $R_\gamma\geq 0$ on  $[0,1]$.\\

 By \eqref{meancurvature}  we have that the mean curvature $H_t$ of the sphere $\sphere^2\times{t}$ in $M$ is given by 
$$
H_t=\frac{E'(t)E(t)\inv}{\Phi(t)}
$$
which is clearly positive by our choice of $E(t)$ and $\Phi(t)$, and is equal to $\frac{\epsilon (1+\epsilon t)\inv}{(At+\frac{\epsilon}{H})}$ for $t\leq 1/4$ and thus approaches $H$ as $t\to 0$.  

Moreover, by \eqref{hawkingmass} and our choice of $E(t)$ and $\Phi(t)$ the Hawking mass of $\Sigma_1=\sphere^2\times {1}$ is given by
\begin{equation}\label{last}
\begin{split}
\m_H(\Sigma_1)&=\frac{r_g\sqrt{E(1)}}{2}\left(1-\frac{r_g^2E'(1)^2 E(1)^{-1} }{4\Phi(1)^2} \right)\\&=\frac{r_g\sqrt{(1+\epsilon)}}{2}\left(1-\frac{r_g^2\epsilon^2 }{4(1+\epsilon)(A/4+\epsilon/H +1)^2} \right)
\end{split}
\end{equation}

Noting that in the above construction, $\epsilon$ could have been chosen arbitrarily small while $A$ could have be chosen arbitrarily large, it follows from \eqref{last} and Proposition \ref{prop2.1} that $\m_B(\sphere^2, g, H)\leq r_g/2$.

\end{proof}

\end{document}